\newcommand{\C}{\mathbb{C}}
\newcommand{\N}{\mathbb{N}}
\renewcommand{\P}{\mathbb{P}}
\newcommand{\Q}{\mathbb{Q}}
\newcommand{\R}{\mathbb{R}}
\renewcommand{\S}{\mathbb{S}}
\newcommand{\Z}{\mathbb{Z}}
\newcommand{\fg}{\mathfrak{g}}
\newcommand{\fP}{\mathfrak{P}}
\newcommand{\cC}{\mathcal{C}}
\newcommand{\cE}{\mathcal{E}}
\newcommand{\cH}{\mathcal{H}}
\newcommand{\cL}{\mathcal{L}}
\newcommand{\cS}{\mathcal{S}}
\newcommand{\cX}{\mathcal{X}}
\renewcommand{\a}{\alpha}
\renewcommand{\b}{\beta}
\renewcommand{\d}{\delta}
\newcommand{\e}{\varepsilon}
\newcommand{\s}{\sigma}
\renewcommand{\phi}{\varphi}
\newcommand{\ie}{{\rm i.e.\ }} 
\renewcommand{\i}{\sqrt{-1}}
\renewcommand{\leq}{\leqslant}
\renewcommand{\geq}{\geqslant}
\newcommand{\abs}[1]{\left\lvert#1\right\rvert}
\newcommand{\norm}[1]{\left\|#1\right\|}
\newcommand{\HS}{\mathrm{HS}}
\newcommand{\NA}{\mathrm{NA}}
\DeclareMathOperator{\area}{area}
\DeclareMathOperator{\Aut}{Aut}
\DeclareMathOperator{\dd}{\sqrt{-1}\partial \bar{\partial}}
\DeclareMathOperator{\GL}{GL}
\DeclareMathOperator{\Hom}{Hom}
\DeclareMathOperator{\id}{id}
\DeclareMathOperator{\vol}{vol}
\numberwithin{equation}{section}       
\newtheorem{prop} {Proposition} [section]
\newtheorem{thm}[prop] {Theorem} 
\newtheorem{dfn}[prop] {Definition}
\newtheorem{lem}[prop] {Lemma}
\newtheorem{rem}[prop]{Remark}
\theoremstyle{remark}
\newtheorem*{ackn}{\bf{Acknowledgment}}
\newtheorem*{thmA}{\bf{Theorem A}} 
\newtheorem*{thmB}{\bf{Theorem B}} 
\newtheorem*{thmC}{\bf{Theorem C}} 
\newtheorem*{thmD}{\bf{Theorem D}}
\newtheorem*{dfn*}{\bf{Definition}}
\title[Uniform stability for equivariant polarizations]{Uniform stability and coercivity for equivariant polarizations} 
\date{\today} 
\author{Tomoyuki Hisamoto}
\address{Graduate School of Mathematics\\
  Nagoya University\\
  Furocho\\
  Chikusa\\
  Nagoya\\ 
  Japan}
\email{hisamoto@math.nagoya-u.ac.jp}
\subjclass[2010]{Primary: 53C55, Secondary: 14M25, 32Q20, 32Q26}
\begin{document}

\begin{abstract}
We introduce uniform K-stability and its relationship with the coercivity property of the K-energy functional, for general polarized manifolds. 
Since the automorphism groups are not necessarily finite, size of the norm measuring uniformity should be reduced with respect to the group action. 
About this point we explain that it is enough to take the reduced norm for a single sub-torus, actually the center, in the cscK problem. 
Our main theorem then describes the slope of the reduced J-functional along any torus-equivariant test configuration. 
In the toric case it is shown that the uniform stability is indeed equivalent to the coercivity of the K-energy.  
In the Fano manifolds case existence of the KE metric implies the uniform stability. 

\end{abstract}

\maketitle

\setcounter{tocdepth}{1}
\tableofcontents 

\section*{Introduction}

The background of this paper is so-called Yau-Tian-Donaldson conjecture, which states that a polarized manifold admits a constant scalar curvature K\"ahler metric  if and only if it is K-polystable. 
To ensure the metric existence, the idea of {\em uniform K-stability} was first introduced by the thesis \cite{Sze06} and developed as it can be seen in \cite{Der15}, \cite{Der16},  \cite{BHJ15}, and \cite{BHJ16}.  
In analytic point of view the K-energy functional is defined over the space of K\"ahler metrics in the prescribed cohomology class and characterizes the cscK property as the critical point. The uniform K-stability then should be translated into the quantitative growth condition, so-called {\em coercivity} of the K-energy. 
Throughout the paper we denote a polarized manifold by $(X, L)$ and 
let $\cH$ be the collection of positively curved fiber metrics on $L$. 

\begin{dfn*}
We say that the K-energy functional $M\colon \cH \to \R$ is coercive (with respect to Aubin's $J$-functional) if there exists a constant $\e, C >0$ such that 
\begin{equation*}
M(\phi) \geq \e J(\phi) -C 
\end{equation*}
holds for any $\phi \in \cH$. 
\end{dfn*}  

Coercivity concept originates from Aubin's strong Moser-Trudinger inequality on the two-sphere. Relation with the existence of a K\"ahler-Einstein metric was first discussed by \cite{Tian97}. This condition assures a minimizer in the appropriate completion of $\cH$ and in the Fano case the weak minimizer in fact defines a smooth K\"ahler-Einstein metric (see \cite{BBGZ13}). 
On the other hand, the new technique exploited by \cite{DR17}, \cite{BDL16} shows that such coercivity in fact follows from the existence of a smooth cscK metric. 
See also Theorem \ref{metric to coercivity} in our framework. 

In the algebraic geometry side a ray on $\cH$ can be regarded as a $\C^*$-equivariant flat family of polarized schemes $(\cX, \cL)$ whose generic fiber is isomorphic to $(X, L)$. The geometric family $(\cX, \cL)$ is called a test configuration. Actually following our previous work \cite{BHJ15} one can define the non-Archimedean analogue $M^{\NA}(\cX, \cL), J^{\NA}(\cX, \cL) \in\Q$ of the classical energy functionals to have 
\begin{equation*}
M^{\NA}(\cX, \cL) = \lim_{t \to \infty} M(\phi^t)/t 
\end{equation*}
for some smooth ray $\phi^t$ compatible with the test configuration ({\em e.g.} for $\phi^t$  obtained by pulling back a Fubini-Study type fiber metric of $\cL$ by the $\C^*$-action). 
In this terminology, Donaldson's definition of K-stability is equivalent to require $M^{\NA}(\cX, \cL)>0$ for any non-trivial $(\cX, \cL)$ and $(X, L)$ is called uniformly K-stable if there exists a constant $\e>0$ which satisfies 
\begin{equation*}
M^{\NA}(\cX, \cL) \geq \e J^{\NA}(\cX, \cL)
\end{equation*}
for any $(\cX, \cL)$. 
It has been clarified that this a priori stronger notion of stability fits together with recently developed algebraic methods to check the stability condition. 
The uniform estimate even helps seeking a cscK metric. We refer to \cite{BBJ15} for the detail of this direction. 

Now, the above definition of uniform stability implies that the automorphism group $\Aut(X, L)$ is finite. When the group is infinite, the required theory has been established only for the toric manifolds. In this paper we extend the definition to general polarized manifold and derive the stability from the coercivity, extending of the slope formula in our previous work \cite{BHJ16}. 
This corresponds to the following coercivity. 

\begin{dfn*}
Let $G=K^\C \subseteq \Aut(X, L)$ be a reductive algebraic subgroup and $K$ be a maximal compact subgroup of $G$. 
We denote the center by $C(G)$. 
Any $g \in \Aut(X, L)$ pull-backs $\phi \in \cH$ to $\phi_g$. 
We say that a functional $F\colon \cH \to \R$ is $G$-coercive if there exists a constant $\e, C >0$ such that 
\begin{equation*}
F(\phi) \geq \e \inf_{g \in C(G)} J(\phi_g) -C 
\end{equation*}
holds for any $\phi \in \cH^K$. 
\end{dfn*}  

One point in the above definition is that we took the infimum for $g \in C(G)$. 
Inspired by the work \cite{DR17}, we explain that this relatively small choice $C(G)$ is sufficient to handle with the cscK problem and also that taking $G$ as a single maximal torus can not control the energy. 
In particular for the Fano case we use the D-energy (introduced in the work of Bando-Mabuchi, Ding) and show: 

\begin{thmA}\label{coercivity to metric in the Fano case} 
Let $X$ be a Fano manifold. Assume that D-energy functional is coercive for a reductive subgroup $G=K_\C \subseteq \Aut(X, -K_X)$. Then $X$ admits a $K$-invariant K\"ahler-Einstein metric in the first Chern class. 
\end{thmA}

Based on these considerations, to introduce the uniform K-stability we first fix a sub-torus $T \subseteq \Aut(X, L)$. For the metric problem one should take $T=C(G)$, however, the main result in the below (Theorem B) is valid for arbitrary torus. 
As long as a test configuration $(\cX, \cL)$ is $T$-equivariant, one may define for any one-parameter subgroup $\mu \in N:= \Hom(\C^*, T)$ the new test configuration $(\cX_\mu, \cL_\mu)$ twisted by $\mu$. 
The definition naturally extends to $N_\Q=N \otimes \Q$. 
We then define the reduced non-Archimedean J-functional 
\begin{equation*}
J_T^\NA(\cX, \cL ) := \inf_{\mu \in N_\Q} J^\NA (\cX_\mu, \cL_\mu). 
\end{equation*} 
On the other hand, the reduced J-energy is naturally defined for $\phi \in \cH^S$ as 
$J_T(\phi) := \inf_{g \in T} J(\phi_g)$.  
Let us now take an $S$-invariant fiber metric $\Phi$ on $\cL$ so that the $\C^*$-action pull-backs $\Phi$ to define the ray $\phi^t \in \cH$ $(t \in [0, \infty))$.  
For the other metric $\Psi$ the difference $\abs{\phi^t -{\psi}^t}$ is bounded uniformly in $t$ so that the choice of $\Phi$ does not matter. 
In the following we may even take a weak (namely non-smooth) geodesic ray associated with the test configuration. 

\begin{thmB}
Let $(X, L)$ be a general polarized manifold. 
For any fiber metric $\Phi$ on a $T$-equivariant test configuration $(\cX, \cL)$ we have   
\begin{equation*}
J_T(\cX, \cL) = \lim_{t \to \infty} \frac{J_T(\phi^t)}{t}. 
\end{equation*} 
\end{thmB}  

It immediately implies: 

\begin{thmC}
If K-energy functional is G-coercive, then the polarized manifold is uniformly K-stable for $G$. 
\end{thmC}


The expected correspondence between the uniform stability and the coercivity is completed for toric polarized manifolds. 
In this setting we may take the maximal torus $T$, the compact sub-torus $S=\Hom (\C^*, T) \otimes \S^1$, and restrict ourselves to the space of invariant metrics $\cH^S$, according to the symmetry of this class of manifolds. A toric test configuration is represented by a convex, rational piecewise-linear function $f$ on the moment polytope $P \subset M_{\R}$. Following \cite{Don02} we define 
\begin{equation*}
L(f):= \int_{\partial P} f - \frac{\area(\partial P)}{\vol(P)}\int_P f, 
\end{equation*}
which is precisely the toric interpretation of the non-Archimedean K-energy. 
In addition, we introduce a new invariant {\em J-norm} as 
\begin{equation*}
\norm{f}_J := \inf_{\ell}\bigg\{  \frac{1}{\vol(P)}\int_P (f+\ell) -\min_P \{ f+\ell\} \bigg\}, 
\end{equation*}
where $\ell$ runs through all the affine functions. This interprets the non-Archimedean J-functional. 

\begin{thmD}
For any toric polarized manifold with the maximal torus $T$, K-energy functional is $T$-coercive in $\cH^{S}$ if and only if there exists a constant $\e>0$ such that 
\begin{equation*}
L(f) \geq \e \norm{f}_J 
\end{equation*} 
holds for any convex, rational piecewise-linear function $f\colon P\to \R$. 

\end{thmD}

Standing on other (a priori not clear to be equivalent) definitions of coercivity the kind of results seems to be known to experts. 
(See the the comments after Remark \ref{constant}.)
We give a detail account of the above result. The discussion compares the several definitions of toric stability and coercivity, and clarifies the relationship with the general picture illustrated in Theorem C. 
The proof of the coercivity originates from \cite{Don02}, \cite{ZZ08b} where they adopt the larger ``boundary norm" to measure the uniformity. 

The organization of the paper is as follows: 
After preliminary materials we first show Theorem B for a general polarization. 
Secondly we look at the toric setting and derive the coercivity from stability. 
In the last we discuss the stability and coercivity for general polarizations, particularly pointing out validity and convenience of our (possibly small) torus setting. 
 


\begin{ackn}
The author express his gratitude to Professor R. Berman. Quite a few ideas for the proof of Theorem B are based on the communication with him.   
The author would like to thank Professor S. Boucksom and M. Jonsson for fruitful discussions. He is supported by JSPS KAKENHI Grant Number 15H06262. 
\end{ackn} 


\section{Preliminary materials}

Throughout the paper $(X, L)$ denotes an $n$-dimensional polarized algebraic manifold, defined over the complex number field $\C$.  
Any K\"ahler metric $\omega$ is chosen to be in the first Chern class $2\pi c_1(L)$. 
The volume $V:= L^n$ is then represented as $\int_X \omega^n$.  
Define $\hat{S}:=-\frac{n}{2}V^{-1}K_XL^{n-1}$ which equals to the mean value of the scalar curvature. 

\subsection{Energy functionals} 
We adopt here the additive notation for the fiber metric of the holomorphic line bundle. In each local trivialization neighborhood one may take a function $\phi$ so that the fiber metric is given by the multiplication of $e^{-\phi}$. 
In the additive notation we write the metric by $\phi$, ignoring the dependence to the local trivialization. 
The curvature $\dd \phi$ is actually independent of those trivializations.  
Any K\"ahler metric in $2\pi c_1(L)$ may be written as $\omega=\omega_\phi= \dd \phi$ for some fiber metric $\phi$ which is unique up to a constant. 
It is convenient to write the things in terms of $\phi$. 
For this reason, we introduce $\cH$ as the set of  smooth fiber metric $\phi$ with $\dd \phi$ positive definite and identify the K\"ahler metric with the element in $\cH$. For example the scalar curvature is written as $S_\phi=S_{\omega_\phi}$. 
It is also necessary to consider the {\em singular} fiber metric, which consists of $L^1$ weights $\phi$ such that the curvature current $\dd \phi$ is semipositive. 
It means that in each local trivialization $\phi$ is identified with the plurisubharmonic (psh for short) function.  

To find out the canonical K\"ahler metrics in $2\pi c_1(L)$, there are the two main energy functionals defined on $\cH$. 
One is Aubin-Mabuchi energy $E$ and the other is Mabuchi's K-energy $M$. 
The two energies in $\cH$ are characterized by their differential:  
\begin{equation*} 
\d E(\phi) = \frac{1}{V}\int_X (\d \phi) {\omega_\phi^n}  \ \ \  \text{and} 
\end{equation*} 
\begin{equation*}
\d M(\phi) = - \frac{1}{V}\int_X (\d \phi) (S_{\phi} -\hat{S}) {\omega_\phi^n}.  
\end{equation*} 
These forms naturally appear in the asymptotic equivariant Riemann-Roch thoerem where $\d \phi$ is replaced to some Hamilton function.  
Further, taking a reference metric $\psi$ we define Aubin's J-functional  \begin{equation*}
J(\phi) = J_\psi(\phi) =\frac{1}{V} \int_X (\phi-\psi) \omega_\psi^n -E(\phi) \geq 0.  
\end{equation*} 
Since $J$ is scale-free: $J(\phi +c) =J(\phi)$, it decends to a functional in $\omega$. 
Following \cite{BEGZ10}, the definition of $E$ and $J$ further extends to a singular $\phi$ once we consider $\omega_\phi^n$ as the {\em non-pluripolar Monge-Amp\`ere product}. It is the natural extension of Bedfod-Taylor's product for bounded psh functions, so that the measure $\omega_\phi^n$ drops the mass on the unbounded locus.   
Another normalization we may take is 
\begin{equation*}
J'(\phi) :=  \sup_X (\phi-\psi) - E(\phi). 
\end{equation*}
It holds ({\em e.g.} from Proposition $2.7$ of \cite{GZ05}) that 
\begin{equation*}
J(\phi) \leq J'(\phi) \leq J(\phi)+C 
\end{equation*}
for some constant $C$ depends on $\psi$.  
$J$-functional plays an essential role for the geometry of $ \cH$ because it is compatible with Darvas' $L^1$-distance 
\begin{equation*}
d(\phi, \psi) := \inf_{\phi^t} \int_0^1 \int_X \abs{\dot{\phi}^t} \omega_{\phi^t}^n  dt, 
\end{equation*}
where the infimum is taken over all smooth paths $\phi^t$ $(t\in [0, 1])$ connecting $\phi$ with $\psi$. See \cite{Dar15} for the detail. 

\subsection{Non-Archimedean counterparts}
G. Tian introduced the notion of K-stability for a Fano manifold, observing behavior of the K-energy along a special degeneration. 
Donaldson gave the algebraic formulation for a polarized manifold and generalizes the Futaki invariant to a test configuration where the degeneration allows a non-normal central fiber. 

\begin{dfn}
Test configuration is a family of polarized schemes $\pi\colon (\cX, \cL) \to \C$
endowed with an action $\lambda\colon \C^* \to \Aut(\cX, \cL)$ such that the general fiber is isomorphic to $(X, L)$. 
\end{dfn} 

Throughout the paper we assume $\cX$ is normal.  
We identify the fiber $(\cX_1 , \cL_1)$ with $(X, L)$ in the sequel. 
If there exists an equivariant morphism $f\colon \cX' \to \cX$ such that $f^*\cL = \cL'$ the two test configurations are considered to be equivalent. 
Replace $\cX$ with $\cX'$, we may assume that $\cX$ dominates the trivial family by $\rho\colon \cX \to \C \times X $. 
It enables us to compare $\cL$ with $\rho^*p_2^* L$. 

In \cite{BHJ16} we described the slope of the energy along an arbitrary test configuration.  
Donaldson-Futaki invariant is further interpreted into the non-Archimedan counterpart of the K-energy functional. 
For a quick introduction, we take the unique compactification $(\overline{\cX}, \overline{\cL}) \to \P^1$ such that the family outside $0 \in \P^1$ is the product space endowed with trivial $\C^*$-action. 

\begin{dfn}
Non-Archimedean Aubin-Mabuchi energy is defined by the intersection number 
\begin{equation*}
E^\NA (\cX, \cL) = \frac{\overline{\cL}^{n+1}}{(n+1)V}. 
\end{equation*}
We define non-Archimedean J-energy as 
\begin{equation*}
J^\NA (\cX, \cL) = V^{-1}\overline{\cL} (\rho^*p_2^* L)^n-E^\NA (\cX, \cL). 
\end{equation*}
Non-Archimedean K-energy is 
\begin{equation*}
M^\NA (\cX, \cL) = V^{-1}(K_{\overline{\cX}/\P^1}^{\log} \overline{\cL}^n) 
+\hat{S} E^\NA(\cX, \cL). 
\end{equation*}
\end{dfn} 

We will not use these expressions in this paper. 
When the automorphism group is not finite we still have to modify the definition of $J^\NA$ but rather use the equivalent definition in words of the induced $\C^*$-action on $H^0(\cX_0, k\cL_0)$.  
We set $N_k:=\dim H^0(\cX_0, k\cL_0)$ and denote the weights of this $\C^*$-action by $\lambda_1, \lambda_2, \dots, \lambda_{N_k}$. 
From \cite{BHJ15} subsection 7.2, it then holds that 
\begin{align*}
E^\NA (\cX, \cL) = \lim_{k \to \infty} \frac{1}{N_k}\sum_{i=1}^{N_k} \frac{\lambda_i}{k}, \ \ \ 
J^\NA (\cX, \cL) = \lim_{k \to \infty} \bigg[ \max_{1\leq i \leq N_k} \frac{\lambda_i}{k} -\frac{1}{N_k}\sum_{i=1}^{N_k} \frac{\lambda_i}{k} \bigg]. 
\end{align*}

For our purpose the most important point is that the non-Archimedean energies are actually equivalent to the slope of the original (Archimedean) energies defined on $\cH$.  
More precisely, we take a smooth fiber metric $\Phi$ of $\cL$ which gives the equivalent class of rays on $\cH$.

\begin{thm}(\cite{BHJ16})\label{BHJslope} 
Let $(\cX, \cL)$ be a test configuration and $\Phi$ a smooth fiber metric of $\cL$. 
For the associated ray $\phi^t(x):= \Phi(\lambda(e^{-t})x)$ on $\cH$ 
we have 
\begin{align*}
E^\NA (\cX, \cL) = \lim_{t \to \infty} \frac{E(\phi^t)}{t} \ \ \  \text{and}  \\ 
M^\NA (\cX, \cL) = \lim_{t \to \infty} \frac{M(\phi^t)}{t}. 
\end{align*} 
\end{thm} 

In the above it is enough to have the metric $\Phi$ defined over the unit disk $\Delta$. 
Any $\phi$ on $(X, L)=(\cX_1, \cL_1)$ extends to the boundary $\pi^{-1}(\partial \Delta)$ in the way 
$\phi(\lambda(e^{\sqrt{-1}\theta})x) =\phi(x)$ and 
there exists unique solution to the Dirichlet problem 
\begin{equation*}
\begin{cases}
(\dd \Phi)^{n+1} = 0 \ \ \ \text{on} ~\pi^{-1}(\Delta) \\
\Phi = \phi \ \ \ \text{on} ~ \pi^{-1}(\partial \Delta) 
\end{cases}
\end{equation*} 
since we assumed $\cX$ is normal. 
Moreover, the solution $\Phi$ is equivalent to the envelope of all singular fiber metric $\Psi$ which satisfies 
$$
\limsup_{z \to \zeta} \Psi(z) \leq \phi(\zeta)  
$$ 
 for any boundary point $\zeta \in \pi^{-1}(\partial \Delta)$.  
Unfortunately, however, the solution is not smooth in general. 
It is recently proved by \cite{CTW18} that $\Phi$ has optimal $C^{1, 1}$-regularity for the smooth boundary data.  
If it is smooth the associated $\phi^t$ defines a geodesic with respect to Mabuhi's Riemannian metric 
\begin{equation*}
\norm{u}^2 = \frac{1}{V}\int_X u^2 \omega_\phi^n. 
\end{equation*} 
In general $\phi^t$ obtained from the above construction is called a weak geodesic ray. 
In Theorem \ref{BHJslope} one can take the weak geodesic ray for $E^\NA$, but it seems unclear for $M^\NA$. 

\section{Slope formula for the reduced $J$-functional}

\subsection{Reduced J-functional}

From now on we fix a complex torus $T \subseteq \Aut(X, L)$ and reduce the $J$-functional in terms of $T$. 
As $\Aut(X, L)$ denotes the group of fiber-preserving endomorphisms of $L$, 
$T$ contains the tautological sub-torus $\C^*$ which acts on the fibers as the constant multiplication. 
Let $N:= \Hom(\C^*, T)$ be the lattice of one-parameter subgroups. 
Note that the compact subtorus $S:=N \otimes \S^1$ is naturally obtained. 

For any $g \in \Aut(X, L)$, we may define the pull-back of the fiber metric: $\phi_g(x) = \phi(gx)$. 
The setting is for the case when the cscK metric is known to be $S$-invariant and so that we mainly consider the $S$-invariant metrics. 
If $g \in T$ it is easy to see that $\phi_g$ is still $S$-invariant. 

Let us introduce the reduced J-energy 
\begin{equation}
J_T(\phi):= \inf_{g \in T} J(\phi_g). 
\end{equation}
This is in parallel with the reduced $L^1$-distance   
\begin{equation*}
d_T(\phi, \psi):= \inf_{g \in T} d(\phi_g, \psi) =\inf_{g \in T} d(\phi, \psi_g). 
\end{equation*}
For our purpose it is more convenient directly to use the symmetric $I$-functional 
\begin{equation*}
I(\phi, \psi) := \frac{1}{V}\int_X (\phi-\psi) \big( \omega_\phi^n-\omega_\psi^n \big), 
\end{equation*}
which is actually comparable to $J$-energy as follows: 
\begin{equation*}
\frac{1}{n+1} I(\phi, \psi) \leq J_\psi (\phi) \leq I(\phi, \psi). 
\end{equation*}
One can easily derive these inequalities, using simple integration by parts. 
By Theorem $1.8$ of \cite{BBEGZ11} $I$-functional moreover satisfies ``almost-triangle inequality" 
\begin{equation*} 
 c_nI(\phi_1, \phi_3) \leq I(\phi_1, \phi_2) +I(\phi_2, \phi_3)
\end{equation*}
so it behaves almost like a distance. The constant $c_n$ depends only on the dimension. 
In place of the reduced $L^1$-distance, we have the reduced $I$-functional 
\begin{equation}
I_T(\phi, \psi):= \inf_{g \in T} I(\phi, \psi_g) =  \inf_{g \in T} I(\phi_g, \psi). 
\end{equation}
Taking the infimum of $c_nI(\phi_1, \phi_3) \leq I(\phi_1, \phi_{2, g}) +I(\phi_{2, g}, \phi_3)$, one indeed obtains the almost-triangle inequality for $I_T$. \\

Next we discuss the non-Archimedean interpretation of the reduced J-functional. 
We assume test configurations are $T$-equivariant, in parallel with $S$-invariant metrics. 
More precisely, as the datum of the test configuration we assume the extended $T$-action on $(\cX, \cL)$,  
which also commutes with the $\C^*$-action $\lambda$. 
Then the associated weak geodesic ray $\phi^t$ emanating from given $\phi \in \cH^S$ is automatically $S$-invariant for all $t$, since it was constructed as the envelope. 
We need the following lemma, which is a consequence of the fact that a torus-equivariant vector bundle over $\C$ is equivariantly trivializable (see the proof of  \cite{Don02}, Lemma 2). 

\begin{lem}
For any $T$-equivariant test configuration $(\cX, \cL)$ 
the natural representation $T \to \Aut(\cX_0, k\cL_0)$ is faithful for any sufficiently large $k$. Moreover, the induced action $\lambda\colon \C^* \to \Aut(\cX_0, k\cL_0)$ factors through some torus $T'$ containing the image of $T$ so that it is naturally identified with the element $\lambda \in N'$. 
\qed
\end{lem} 

Since $\Aut(\cX_0, k\cL_0)$ is stable for large $k$, so is a choice of $T'$. 
Regarding the above lemma, we consider the same space $(\cX, \cL)$ endowed with a possibly irrational $\lambda \in N'_\R$ and call it {\em an irrational $T$-equivariant test configuration}. We remark that it is equivalent to allowing real parameterization for the associated finitely generated filtration (explained {\em e.g.} in \cite{BHJ15}). 

Given a $T$-equivariant test configuration $(\cX, \cL)$, one can twist it by any $\mu \in N_\R$ to obtain a new irrational  $T$-equivariant test configuration $(\cX_\mu, \cL_\mu)$. 
As a family $(\cX_\mu, \cL_\mu)$ is isomorphic to $(\cX, \cL)$ but the action is changed to $\lambda +\mu \in N'_\R$. 
If one denotes the weights for the $\C^*$-module $H^0(\cX_0, k\cL_0)$ by $\lambda_1, \cdots \lambda_{N_k}$, the same basis diagonalizes $\mu$ so that the weights for $(\cX_\mu, \cL_\mu)$ are $\lambda_1+\mu_1, \cdots,  \lambda_{N_k}+\mu_{N_k}$. 

We are going to describe $J^\NA (\cX_\mu, \cL_\mu)$ in terms of the weights. 
Notice that the compactification $\overline{\cX_\mu}$ (and hence the intersection number description of $J^\NA$) depends on the action, since it is forced to be trivial around $\infty \in \P^1$.  
We here use another description of the non-Archimedean J-functional in \cite{BHJ15}. It follows from subsection 7.2 of \cite{BHJ15} that for rational $\mu$ one has 
\begin{equation}\label{weight description} 
J^{\NA}(\cX_\mu, \cL_\mu) = \lim_{k \to \infty} \bigg[ \max_i \frac{\lambda_i+\mu_i}{k} -\frac{1}{N_k} \sum_{i=1}^{N_k}  \frac{\lambda_i+\mu_i}{k} \bigg]. 
\end{equation}
The limit  exists by the homegeneity. 
One may observe that each term in the description corresponds to the term  in the analytic definition of $J$-functional. 
As well, if the test configuration is product, \ie  if the $\C^*$-action $\lambda$ is contained in $\Aut(X, L)$, the above formula is a simple consequence of the equivariant Riemann-Roch formula. 

For general $\mu \in N_{\R}$ we extend $J^{\NA}(\cX_\mu, \cL_\mu)$ by the continuity which is a consequence of the following proposition. 
Notice that since the associated weak geodesic $\Phi$ is psh the ray $\phi^t$ is convex in $t$. One can then easily see that time derivative
\begin{equation*}
\dot{\phi}^0:=\inf_{t>0} \frac{\phi^t -\phi^0}{t}
\end{equation*}
is well-defined. 
More deep analysis shows that the weak geodesic has $C^{1, 1}$-regularity. 

\begin{prop}\label{analytic description of J^NA}
Let $(\cX, \cL)$ be a $T$-equivariant test configuration and $\phi^t$ the associated weak geodesic ray.
For any $\mu \in N_{\Q}$ we have the analytic description 
\begin{equation*}
J^{\NA}(\cX_\mu, \cL_\mu) = \sup_X (\dot{\phi}^0 +h_\mu) -\frac{1}{V}\int_X  (\dot{\phi}^0 +h_\mu) \omega_{\phi^0}^n,  
\end{equation*}
where $h_\mu$ is the normalized Hamilton function in terms of $\phi^0$. 
In particular $J^{\NA}(\cX_\mu, \cL_\mu)$ is continuous in $\mu \in N_{\Q}$. 
\end{prop}

\begin{proof}
From the main argument of \cite{His16a}, \cite{His16b} we have the convergence to the push-forward measure 
\begin{equation*}
\frac{1}{N_k}\sum \delta_{\frac{\lambda_i+\mu_i}{k}} \to (\dot{\phi}^0 +h_\mu)_*(V^{-1}\omega^n). 
\end{equation*}
On the other hand Theorem 5.16 of \cite{BHJ15} shows that whenever $\mu$ is rational 
$\max_i \frac{\lambda_i+\mu_i}{k}$ is stable for any $k$ sufficiently divisible.

\end{proof} 

\begin{dfn}
Let us define the reduced non-Archimedean $J$-functional as  
\begin{equation}
J_{T}^{\NA}(\cX, \cL) := \inf_{\mu \in N_{\R} } J^{\NA}(\cX_\mu, \cL_\mu).  
\end{equation}
\end{dfn}

The above infimum in fact attained by a rational $\mu$. 
One can further check from (\ref{weight description}) the following: 

\begin{lem}\label{PL convexity}
$J^{\NA}(\cX_\mu, \cL_\mu)$ is rational, piecewise-linear convex function in $\lambda+\mu \in N'_\R$ and properly grows in $\mu \in N_\R$.
\end{lem} 

We also remark that the $L^2$-orthogonal part of $(\cX, \cL)$ to vector fields is attained by a rational $\mu$ and J-norm of the projected test configuration is equivalent to what we defined above. 
See \cite{His16b} for the detail. 
It seems that, however, we still need the definition of $J^{\NA}(\cX_\mu, \cL_\mu)$ for general $\mu \in N_{\R}$ in proving the slope formula for the reduced $J$-functional. 

\subsection{Function of log-norm singularities}\label{log norm}

In this subsection we study $J(\phi_g)$ as a function in $g\in \Aut(X, L)$. 
If one takes $\phi$ as a Fubini-Study type fiber metric 
\begin{equation*}
\phi (x) = \frac{1}{N_k} \log \big( \abs{s_1(x)}^2 + \cdots +\abs{s_{N_r}(x)}^2 \big) 
\end{equation*}
defined by the basis 
$s_1, \dots, s_{N_r} \in H^0(X, rL)$, 
the pull-back $\phi_g$ makes sense for $g \in \GL(N_r; \C)$. 
The above exponent $r$ is taken sufficiently large and absolute value of the section regards the local trivialization.    
We define the functionals 
\begin{equation*} 
{\bf e}(g):= E(\phi_g),~ {\bf j}(g):= J(\phi_g),~ \text{and}~ \  {\bf m}(g) := M(\phi_g). 
\end{equation*}  
Let us starts from the following observation which is itself interesting.  
The author learned the argument from the communication with Professor S. Boucksom. 
Denote the identity component of the automorphism group by $\Aut^0(X, L)$. 

\begin{thm}\label{plurisubharmonicity} 
The energies ${\bf e}$ and ${\bf m}$ are pluriharmonic and ${\bf j}$ is plurisubharmonic along $\Aut^0(X, L)$. 
\end{thm} 

\begin{proof}
Let us take an arbitrary holomorphic map $g \colon \Delta \to \Aut^0(X, L)$ which sends $z\in\Delta$ in the one dimensional disk to $g(z)$. We have the well-known fiber integration formula:  
\begin{equation*}
\dd_z E(\phi_{g(z)}) = (n+1)^{-1}V^{-1} \int_X (\dd_{z, x} \phi_{g(z)}(x))^{n+1}. 
\end{equation*}
Define the holomorphic map $F\colon \Delta\times X \to X$ by $F(z, x):=g(z)x$ and proceed as  
\begin{equation*}
(\dd_{z, x} \phi_{g(z)}(x))^{n+1} 
= (\dd_{z, x} F^*\phi)^{n+1} 
=F^* (\dd_{x} \phi)^{n+1} =0.  
\end{equation*}
This completes the proof for ${\bf e}$. Plurisubharmonicity of ${\bf j}$ is immediate. Let us focus on the {\bf m}. We regard K-energy as a metric on the Deligne pairing $\langle K_{\Delta \times X /\Delta}\cL^n\rangle +(n+1)^{-1} \hat{S} \langle \cL^{n+1} \rangle $ with $\cL := F^* L $ (see {\em e.g.} \cite{BHJ16} for such identification). Then the proof is reduced to show 
\begin{equation*}
\dd_{z, x} \big\langle \log (\dd\phi_{g(z)})^n, \phi_{g(z)}, \dots, \phi_{g(z)} \big\rangle =0,  
\end{equation*}
where the Monge-Amp\'ere operator is taken over $X$. 
Endow the metric $ \Psi := \log (dd^c\phi_{g(z)})^n$ on the relative canonical $K_{\Delta \times X /\Delta}$ and  $\psi:= \log (dd^c\phi)^n$ on $K_X$. Since the natural automorphism $G\colon (z, x) \mapsto (z, g(z)x)$ of $\Delta \times X$ induces $F^*K_X \simeq p_2^*K_X =  K_{\Delta \times X /\Delta}$ it is enough to show 
\begin{equation*}
F^* \psi = \Psi. 
\end{equation*}
This can be checked fiberwisely. Indeed for each fixed $z$ we have the inclusion $i_z\colon X \to \Delta \times X $ such that 
\begin{align*}
e^{\Psi_z} &= (dd^ci_z^* F^*\phi)^n = i_z^* (dd^c_{z, x}F^*\phi)^n \\
 &= i_z^*F^* (dd^c\phi)^n = g(z)^*  (dd^c\phi)^n. 
\end{align*}
\end{proof} 

Plurisubharmonicity on $\GL(N_r, \C)$ is not the case. 
A fundamental result of \cite{Paul12} (see also \cite{Li12}) tells us that asymptotic of these functions are controlled by the difference of two plurisubharmic functions. 
More specifically it is described by the actions to two homogeneous polynomials; $X$-resultant $R$ and $X$-hyperdiscriminant $\Delta$ associated with the Kodaira embedding. 
To be precise, taking norms $\norm{\cdot}$ of $\GL(N_r;\C)$-vector space and the Hilbert-Schmidt norm $\norm{\cdot}_{\HS}$ of $\GL(N_r;\C)$ itself we have 
\begin{align*}
&{\bf m}(g) = V^{-1} \log\norm{g\cdot \Delta} - V^{-1} \frac{\deg \Delta}{\deg R}\log\norm{g\cdot R} +O(1) \ \ \text{and} \\ 
&{\bf j}(g) =  V^{-1}\log \norm{g}_{\HS} - V^{-1} \frac{1}{\deg R}\log\norm{g\cdot R} +O(1). 
\end{align*}
Moreover the second term for ${\bf j}$ just corresponds to the Aubin-Mabuchi functional. 
The point here is that we can not expect to have a single log-norm term as we had in the classical GIT setting. 
In the terminology of \cite{BHJ16}, for a general reductive group $G$ a function $f\colon G\to \R$ of the form 
\begin{equation*}
f(g) = a\log \norm{g\cdot v} - b\log \norm{g \cdot w} +O(1)
\end{equation*}
is said to have {\em log-norm singularities}. Even in this generalized ``pair of log-norm terms " setting, we have the correspondence of  the Hilbert-Mumford weight for a given one-parameter subgroup. 

\begin{thm}[Theorem $4.4$, (i) of  \cite{BHJ16}]\label{Hilbert-Mumford}
Let $f$ be a function on $G$ with log norm singularities. Then, 
for each one-parameter subgroup $\lambda\colon \C^* \to G$ there exists $f^{\NA}(\lambda) \in \Q$ such that 
\begin{equation*}
f(\lambda(\tau)) = f^{\NA}(\lambda) \log \abs{\tau}^{-1} +O(1) 
\end{equation*} 
for $\abs{\tau}\leq1$. 
\end{thm}

It is crucial how $O(1)$ term depends on $\lambda$ so we sketch the proof. 

\begin{proof}
Assume that the image of $\lambda $ is contained in the fixed torus $T \subseteq G$. 
Let 
\begin{equation*}
V=\sum_{\mu \in M} V_\mu
\end{equation*}
be the weight decomposition for $T$ so that 
$\lambda(\tau) v_\mu= \tau^{\langle \mu, \lambda \rangle} v_\mu$ if $v_\mu \in V_\mu$. 
Writing $v=\sum_\mu v_\mu$, we obtain
\begin{equation*}
\log \norm{\lambda(\tau)\cdot v}
=\max_{v_\mu \neq 0} \big(\langle \mu, \lambda \rangle \log \abs{\tau} +\log \norm{v_\mu} \big) +O(1)
=-\big(\min_{v_\mu\neq 0}\langle \mu, \lambda \rangle \big) \log \abs{\tau}^{-1} +O(1)
\end{equation*}
for $\abs{\tau} \to 0$. We may take  
$ f^\NA(\lambda) :=-\min_{v_\mu\neq 0}\langle \mu, \lambda \rangle $. 
\end{proof} 

\begin{rem}\label{O(1)}
In the proof of Theorem B, we will use the above theorem when $G=T \subseteq \Aut(X, L)$. 
The above proof shows that one can take $O(1)$ term uniform in $\lambda \in N$. 
The uniformity fails if the image of $\lambda\colon \C^* \to G$ is not contained in the fixed torus. 
\end{rem} 

\subsection{Proof of Theorem B}

Let us take a $T$-equivariant test configuration $(\cX, \cL)$ to show the slope formula: 
\begin{equation*}
J_{T}^{\NA}(\cX, \cL) = \lim_{t \to \infty} J_T(\phi^t)/t. 
\end{equation*} 

We first observe that the limit on the right-hand side exists. Indeed a function $F(g, \tau):=J(\phi_g^{-\log \abs{\tau}})$ on $T \times \Delta$ is plurisubharmonic by Theorem \ref{plurisubharmonicity}. It is moreover $S$-invariant since $\Phi$ is $S$-invariant. We conclude by Kiselman's minimum principle that 
\begin{equation*}
\inf_{g \in T} F(g, \tau) 
\end{equation*} 
is pluriharmonic in $\tau \in \Delta$, hence $J_T(\phi^t)$ is convex. 
Combined with the linear bound $J_T(\phi^t) \leq J(\phi^t) \leq Ct$ convexity assures the limit. 

Notice that any choice of $\Phi$ gives the same limit. 
For any other $\Psi$, difference of two fiber metrics $\Phi-\Psi$ is uniformly bounded over the unit closed disc. 
It is easy to check the inequality 
\begin{equation*}
\abs{J_T(\phi^t)-J_T(\psi^t)} \leq 2\sup_X \abs{\phi^t - \psi^t}. 
\end{equation*}

Next we show 
\begin{lem}
For any $\phi$ there exists $g \in T$ which attains the infimum  $J_T(\phi)=J(\phi_g)$. 
\end{lem} 
\begin{proof}
From Theorem \ref{Hilbert-Mumford} 
the function ${\bf j}(g)=J(\phi_g)$ is controlled as 
\begin{equation*}
{\bf j}(\mu(e^{-t})) = J^\NA (\mu)t + O(1),  
\end{equation*}
where $J^\NA (\mu)$ is equivalent to the non-Archimedean functional of the product test configuration generated by $\mu$. 
As we saw in Remark \ref{O(1)} the above $O(1)$ term is independent of $\mu \in N$. We obtain the same conclusion for $\mu \in N_\Q$ by the homogeneity. 
From the description (\ref{weight description}) for the trivial test configuration it is easy to see that $J^\NA \geq 0 $ for $\mu \in N_\Q$ and the equality holds iff $\mu$ is trivial. The homogeneity and the continuity property extends $J^{\NA}$ to a rational, piecewise-linear strictly convex function on $N_\R$. We claim that ${\bf j}(g) \to \infty$ as $ g\to \infty$ in $T$. If not, we have a sequence $g_k \to \infty $ with ${\bf j}(g_k)$ bounded. Denote the first projection of $g_k \in T=N_\R \times S$ by $\log \abs{g_k}$. 
Note that even if $\log \abs{g_k} \in N_\R$ is irrational $\log \abs{g_k} (e^{-t}) \in T$ is well-defined for any $t \in [0, \infty)$.  
Since $\phi$ is $S$-invariant it yields 
\begin{equation*}
{\bf j}(g_k) ={\bf j}(\log \abs{g_k}(e^{-1})) 
=J^\NA (\log \abs{g_k}) + O(1)
\end{equation*}
which is a contradiction. 
\end{proof}

Now we prove the slope formula. 
When $\mu \in N$, the twisted ray $\phi^t_{\mu(e^{-t})}$ is associated with $(\cX_\mu, \cL_\mu)$.   
It follows 
\begin{equation*}
J^{\NA}(\cX_\mu, \cL_\mu) = \lim_{t \to \infty} J(\phi^t_{(\mu)(e^{-t})})/t
\geq \lim_{t \to \infty} J_T(\phi^t)/t.   
\end{equation*}
The homogeneity shows the same conclusion for $\mu \in N_\Q$. 

The problem is another direction. 
From the lemma we may take $g_t$ so that 
$J(\phi^t_{g_t})=J_T(\phi^t)$. 
The almost triangle inequality shows 
\begin{equation*}
c_nI(\phi^0, \phi^0_{g_t^{-1}})
\leq I(\phi^0, \phi^t) + I(\phi^t, \phi^0_{g_t^{-1}}) 
= I(\phi^0, \phi^t) + I(\phi^0, \phi^t_{g_t}). 
\end{equation*}
Since the last term $I(\phi^0, \phi^t_{\s_t})$ is bounded from above by 
\begin{equation*}
 (n+1)J(\phi^t_{\s_t})=(n+1)J_T(\phi^t)\leq (n+1)J(\phi^t) \leq Ct, 
\end{equation*}
we obtain the linear bound
\begin{equation*}
{\bf j}(g_t)=J(\phi^0_{g_t}) \leq Ct.  
\end{equation*}
The left-hand side is equivalent to 
\begin{equation*}
{\bf j}(\frac{1}{t}\log \abs{g_t}(e^{-t})) = J^{\NA}(\frac{1}{t}\log \abs{g_t})t +O(1) 
\end{equation*}
hence it implies 
\begin{equation*}
J^{\NA}(\frac{1}{t}\log \abs{g_t}) \leq C. 
\end{equation*} 
As we already observed that $J^{\NA}$ is a piecewise-linear strictly convex function on $N_{\R}$ the above bound produces a convergent subsequence $\frac{1}{t_k}\log \abs{g_{t_k}} \to \mu \in N_\R$. The uniform convergence 
\begin{equation*}
\Phi((\lambda + \frac{1}{t_k}\log \abs{g_{t_k}})(e^{-t_k})z)
\to \Phi((\lambda +\mu)(e^{-t_k})z) 
\end{equation*}
follows from the compactness of $\cX$ over the unit closed disk. With the estimate 
\begin{equation*}
\abs{J(\phi^t_{g_t}) - J(\phi^t_{\mu(e^{-t})})}
\leq 2 \sup_X \abs{\phi^t_{g_t}-\phi^t_{\mu(e^{-t})}}, 
\end{equation*}
we conclude 
\begin{equation*}
\lim_{t \to \infty} J_T(\phi^t)/t = \lim_{t \to \infty} J(\phi^t_{\mu(e^{^{-t}})})/t 
=J^{\NA}(\cX_\mu, \cL_\mu) \geq J^{\NA}_T(\cX, \cL). 
\end{equation*}


\section{Coercivity for the toric case}

In this section we explain how the the reduced J-functional and stability is translated into the toric terminology. 
We start from quickly reviewing the toric setting for the readers' convenience. See \cite{Gui94}, \cite{Abr98}, and \cite{Don02} for the detail. 
Let $M$ be a integral lattice of rank $n$. A toric polarized manifold is defined by a Delzant polytope $P$ in $M_{\R}:=M\otimes_{\Z} \R$. In our convention $P$ contains its boundary and we distinguish the interior as $P^\circ$. Notation for the dual lattice $N$ consists with the previous sections. The complex torus $T:= N \otimes \C^*$ naturally acts on $X$ with the open dense orbit.

\subsection{Torus-invariant metrics}
Every moment map $\mu\colon X \to M_{\R}$ gives a homeomorphism $X_0 :=\mu^{-1}(P^\circ) \simeq S \times P^\circ$ so that $N_{\R} \times P^{\circ}$ universally  covers $X_0$. 
We further take a coordinate $(x_1, \dots, x_n)$ of $M_{\R}$. 
Dual coordinate $(\eta_1, \dots, \eta_n)$ of $N_{\R}$  induces the complex coordinate $\log z_i = \xi_i + \i \eta_i$ of $X_0\simeq (\C^*)^n$. The K\"ahler metric defining $\mu$ is written in $X_0$ as 
\begin{equation*}
\omega = \sum dx_i \wedge d \eta_i. 
\end{equation*}
Conversely, any $S$-invariant K\"ahler metric $\omega_\phi= \dd\phi$ is represented by the local function $\phi$ on $X_0$. If one denotes it by $\phi (\xi_1, \dots \xi_n)$ the gradient 
\begin{equation*}
(z_1, \dotsm z_n) \mapsto \big( \frac{\partial\phi}{\partial \xi_1}, \dots,  \frac{\partial\phi}{\partial \xi_n}\big)
\end{equation*} 
gives the moment map for $\omega_\phi$. Notice that the image $P$ is independent of these additional data. 

The Legendre transform 
\begin{align*}
u(x_1, \dots, x_n) &:= \sup \big\{ \sum x_i \xi_i -\phi(\xi) \big\} \\
&= \sum x_i \xi_i -\phi(\xi) \ \  \text{if} \ x_i=\frac{\partial\phi}{\partial\xi_i}
\end{align*} 
is called toric potential. By the standard Delzant construction, one can prove that $u$ defines a convex function on $P^\circ$. 
Moreover, $u$ is characterized by the Guillemin boundary condition. 
That is, if $P$ is written in the form 
\begin{equation}
P= \big\{ x\in M_\R:  \ \ell_k(x):= \langle x, \a_k \rangle -\b_k  \geq 0  \ \text{for any} \  1\leq k\leq d \big\}, 
\end{equation}
then $u- \frac{1}{2} \sum_{k=1}^d \ell_k \log \ell_k$ is smooth up to the boundary. Conversely, such a convex function produces an $S$-invariant K\"ahler metric. 
We denote by $\cS$ the collection of toric potentials with the Guillemin boundary condition. 
We also need the space $\cC_{\infty}$ which consists of convex functions continuous on $P$ and smooth in the interior. 

If one take an affine function and rescales $u$ to $u- \sum a_i x_i -b$, $\phi$ changes into $\phi(\xi_i+a_i)+b$. 
Note that the change of variables $\xi_i \mapsto \xi_i +a_i$ is just given by the torus-action. 
We may rescale $u$ to have a point $x_0 \in P^{\circ}$ as a minimizer.
Rescaling again by  
\begin{equation*}
 \sum a_i x_i +b = \sum \frac{\partial u}{\partial x_i}(x^0)(x_i -x^0_i) +u(x^0)
\end{equation*} 
one obtains the toric potential $u$ satisfying 
\begin{align*}
&(i) \ \ \inf_P u =u(x_0) =  0  \ \ \text{and} \\
&(ii) \ \ \frac{\partial u}{\partial x_i}(x_0) =0 \ \ \text{ for all} \ 1\leq i\leq n. 
\end{align*} 
We say $u$ is normalized at this end. 

The scalar curvature is given by a fourth-order differential of $u$. 

\begin{thm}[\cite{Abr98}]\label{Abreu}
For any toric potential 
\begin{itemize} 
\item[$(i)$]
Hessian is transformed as 
\begin{equation*}
(u_{ij}):=\big( \frac{\partial^2 u}{\partial x_i \partial x_j} \big) = \big( \frac{\partial^2\psi}{\partial\xi_i\partial \xi_j} \big)^{-1} \ \ \text{if} \ x_i=\frac{\partial\psi}{\partial\xi_i}. 
\end{equation*} 
\item[$(ii)$]
Let $(u^{ij}):=(u_{ij})^{-1}$. The scalar curvature is given by fsymplectic
\begin{align*}
S_{\phi}
&= -\frac{1}{2} \sum_{i, j=1}^{n} \frac{\partial^2 u^{ij}}{\partial x_i \partial x_j} \ \ \text{if} \ x_i=\frac{\partial\psi}{\partial\xi_i}. 
\end{align*} 
\end{itemize} 
\end{thm}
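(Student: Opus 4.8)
The plan is to verify both assertions by a direct computation on the open orbit $X_0\simeq(\C^*)^n$, where the geometry is governed entirely by the two mutually Legendre-dual convex functions $\psi(\xi)$ and $u(x)$.

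Assertion $(i)$ is the classical involutivity of the Legendre transform. The gradient map $\xi\mapsto x$, $x_i=\partial\psi/\partial\xi_i$, is a diffeomorphism of $N_\R$ onto $P^\circ$ whose inverse is $x\mapsto\xi$, $\xi_i=\partial u/\partial x_i$; this is built into the construction of $u$, since the supremum defining it is attained precisely at $x_i=\partial\psi/\partial\xi_i$. Differentiating the identity expressing that these two maps are inverse to each other and applying the chain rule shows that their Jacobians, namely the Hessians $(\psi_{ij})$ and $(u_{ij})$, are mutually inverse matrices at corresponding points; this is exactly $(i)$. I would then record for later use the resulting matrix relations $\psi^{ij}=u_{ij}$ and $u^{ij}=\psi_{ij}$, together with the operator identities $\partial/\partial\xi_j=\sum_b u^{bj}\,\partial/\partial x_b$ and, equivalently, $\sum_i u_{ij}\,\partial/\partial\xi_i=\partial/\partial x_j$.

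For $(ii)$ I would first make the metric explicit. In the logarithmic coordinates $w_i=\log z_i=\xi_i+\i\eta_i$ the $\S$-invariant potential is $\phi=\psi(\xi)$ with $\xi_i=\Re w_i$, so that $\omega_\phi=\sum dx_i\wedge d\eta_i=\tfrac{\i}{2}\sum_{i,j}\psi_{ij}\,dw_i\wedge d\bar w_j$ and the Hermitian metric is $g_{i\bar j}=\tfrac12\psi_{ij}$. Hence $\det(g_{i\bar j})$ is a constant multiple of $\det(\psi_{kl})$, the constant drops under $dd^c$ in $\Ric\,\omega_\phi=-dd^c\log\det(g_{i\bar j})$, and tracing against $\omega_\phi$ gives
\begin{equation*}
S_{\omega_\phi}=-\tfrac12\sum_{i,j}\psi^{ij}\frac{\partial^2 F}{\partial\xi_i\partial\xi_j},\qquad F:=\log\det(\psi_{kl}),
\end{equation*}
where $\psi^{ij}=u_{ij}$ by $(i)$ and $F=-\log\det(u_{kl})$ as a function of $x$; the factor $-\tfrac12$ is fixed by the normalization $\omega=\sum dx_i\wedge d\eta_i$.

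The heart of the matter is to push this second-order operator through the Legendre change of variables and watch it collapse into divergence form. Using $\psi^{ij}=u_{ij}$ and the operator identity $\sum_i u_{ij}\,\partial/\partial\xi_i=\partial/\partial x_j$, the expression reduces to $S_{\omega_\phi}=-\tfrac12\sum_j\partial_{x_j}\big(\partial F/\partial\xi_j\big)$, so it only remains to identify $\partial F/\partial\xi_j$ with $\sum_b\partial_{x_b}u^{bj}$. This is the one genuinely arithmetic point: expanding
\begin{equation*}
\frac{\partial F}{\partial\xi_j}=-\sum_b u^{bj}\frac{\partial}{\partial x_b}\log\det(u_{kl})=-\sum_{b,p,q}u^{bj}u^{pq}u_{bpq}
\end{equation*}
and comparing with $\sum_b\partial_{x_b}u^{bj}=-\sum_{b,p,q}u^{bp}u^{qj}u_{bpq}$, obtained from $\partial(u^{-1})=-u^{-1}(\partial u)u^{-1}$, the two sums coincide after relabelling, because the third derivatives $u_{bpq}=\partial_{x_b}\partial_{x_p}\partial_{x_q}u$ are totally symmetric and the inverse Hessian is symmetric. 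Substituting yields Abreu's formula
\begin{equation*}
S_{\omega_\phi}=-\tfrac12\sum_{i,j}\frac{\partial^2 u^{ij}}{\partial x_i\partial x_j}.
\end{equation*}

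I expect the only real obstacle to be bookkeeping: fixing the normalizations of $d^c$ and of $\Ric\,\omega_\phi$ so that the factors of $2$ line up and the displayed $-\tfrac12$ is exactly right, and carefully justifying the symmetry cancellation that turns the $\xi$-trace of the Hessian of $\log\det(\psi_{ij})$ into the $x$-divergence of the inverse symplectic Hessian. Neither point is conceptually difficult once $(i)$ supplies the dictionary between the $\xi$- and $x$-pictures.
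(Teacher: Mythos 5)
Your computation is correct and complete in outline: part $(i)$ is the standard involutivity of the Legendre transform, and your derivation of $(ii)$ via $S_{\omega_\phi}=-\tfrac12\sum_{i,j}\psi^{ij}\partial_{\xi_i}\partial_{\xi_j}\log\det(\psi_{kl})$ followed by the change of variables and the symmetry cancellation of third derivatives is exactly Abreu's original argument. The paper itself states this theorem without proof, citing Abreu, so there is nothing to compare beyond noting that your proposal reproduces the standard proof from the cited source.
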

\qed

Based on the above \cite{Don02} proved the following formula.  

\begin{thm}\label{toricK-energy} 
Let $u$ be the toric potential of an $S$-invariant metric $\phi$. Then the energy functionals are written as 
\begin{align*}
& E(\phi) = -\frac{1}{V} \int_P u \ \ \text{and} \\
& M(\phi) = -\frac{1}{2}\int_P \log \det (u_{ij}) + \int_{\partial P} u -  \hat{S} \int_Pu. 
\end{align*}
\end{thm} 
\qed

It is remarkable in the toric case that the linear part is given by 
\begin{equation*}
L(u) = \int_{\partial P} u - \hat{S} \int_P u. 
\end{equation*}
We will consider the energy as a function in $u$ and write as $E(u)$ or $M(u)$ in abuse of notation. 
For $J$-functional we recall: 
\begin{lem}[\cite{ZZ08b}, Lemma $2.1$]\label{toricJ}
There exists a constant $C>0$ such that for any normalized toric potential $u$ 
\begin{equation*}
\abs{\ J(\phi) -\frac{1}{V}\int_P u \ } \leq C. 
\end{equation*} 
\end{lem}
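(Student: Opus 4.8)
The plan is to reduce the claim to a pair of pointwise Legendre estimates on $u$ and then integrate them over $P$. First I would combine the defining formula $J(\phi)=\frac{1}{V}\int_X\phi\,\omega^n/n!-E(\phi)$ with the energy identity $VE(\phi)=-\int_P u$ from Theorem \ref{Chen}$(i)$. The two occurrences of $\int_P u$ cancel and leave the exact identity
\[
J(\phi)-\frac{1}{V}\int_P u \;=\; \frac{1}{V}\int_X \phi\,\frac{\omega^n}{n!},
\]
where $\phi$ denotes the potential relative to the reference metric $\omega$. Thus the lemma is equivalent to the uniform bound $\bigl|\tfrac{1}{V}\int_X\phi\,\omega^n/n!\bigr|\le C$ over all \emph{normalized} symplectic potentials. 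It is worth recording that this is precisely the quantity on which the normalization bites: replacing $\psi$ by $\psi+c$ shifts $u$ to $u-c$, so without fixing the additive constant the difference $J-\tfrac{1}{V}\int_P u$ would carry no meaning.

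Second, I would transport the integral to the polytope. On the open orbit $X_0$ the relative potential equals $\psi-\psi_0$, where $\psi$ and $\psi_0$ are the local potentials (functions of $\xi$) of $\omega_\phi$ and $\omega$, and $\psi$ is the Legendre dual of $u$. Since the reference moment map $\mu_0=\nabla\psi_0$ pushes $\omega^n/n!$ forward to the Lebesgue measure $\mathbf 1_P\,dx$, with $\xi=\nabla u_0(x)$ on $P$, I obtain
\[
\frac{1}{V}\int_X\phi\,\frac{\omega^n}{n!}\;=\;\frac{1}{V}\int_P\bigl[\psi(\nabla u_0(x))-\psi_0(\nabla u_0(x))\bigr]\,dx.
\]
The reference term $\int_P\psi_0(\nabla u_0(x))\,dx=\int_P\bigl[\langle x,\nabla u_0\rangle-u_0\bigr]\,dx$ is a fixed finite constant, because a symplectic potential has only logarithmic (hence integrable) blow-up of its gradient at $\partial P$. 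Everything therefore reduces to bounding $\int_P\psi(\nabla u_0(x))\,dx$ uniformly in $u$.

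Third, the core of the argument is two pointwise estimates for $\psi$ that use only the normalization $\inf_P u=u(x^0)=0$. From $u\ge 0$ I get the upper bound $\psi(\eta)=\sup_{x\in P}\bigl(\langle x,\eta\rangle-u(x)\bigr)\le\sup_{x\in P}\langle x,\eta\rangle=h_P(\eta)$, the support function of $P$; and evaluating the defining supremum at the single minimizer gives the lower bound $\psi(\eta)\ge\langle x^0,\eta\rangle-u(x^0)=\langle x^0,\eta\rangle$. Both $h_P(\nabla u_0(x))$ and $\langle x^0,\nabla u_0(x)\rangle$ are dominated by $(\max_P|x|)\,|\nabla u_0(x)|$, which is integrable over $P$, and $x^0\in P$ is bounded; integrating the sandwich $\langle x^0,\nabla u_0(x)\rangle\le\psi(\nabla u_0(x))\le h_P(\nabla u_0(x))$ thus yields a bound on $\int_P\psi(\nabla u_0(x))\,dx$ depending only on $(P,u_0)$. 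Subtracting the fixed reference term produces the desired constant $C$.

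I expect the only genuine obstacle to be the \emph{uniformity} of the lower bound. The naive Legendre estimate $\psi(\nabla u_0(x))\ge\langle x,\nabla u_0(x)\rangle-u(x)$, obtained by testing the supremum at the running point $x$, reintroduces $\int_P u$ and degenerates to $-\infty$ as $\int_P u\to\infty$; note also that $\phi=\psi-\psi_0$ is \emph{not} uniformly bounded pointwise, so one cannot avoid integrating. The resolution is to test the supremum at the fixed minimizer $x^0$, replacing $\psi$ from below by a single supporting affine function whose polytope integral is controlled by the geometry of $(P,u_0)$ rather than by the global size of $u$; the upper bound, in contrast, is immediate from $u\ge 0$.
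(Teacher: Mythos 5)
The paper does not actually prove this lemma: it is imported from \cite{ZZ08b}, Lemma $2.1$, with the \verb|\qed| recording that no internal argument is given. So there is nothing in the text to compare against, and your proposal should be judged as a self-contained substitute proof --- which it is, and it is correct. The reduction via Theorem \ref{Chen}$(i)$ to the exact identity $J(\phi)-\tfrac1V\int_P u=\tfrac1V\int_X\phi\,\omega^n/n!$, the push-forward under the reference moment map to $\tfrac1V\int_P\bigl[\psi(\nabla u_0)-\psi_0(\nabla u_0)\bigr]dx$, and the two-sided Legendre sandwich $\langle x^0,\eta\rangle\le\psi(\eta)\le h_P(\eta)$ (which uses only $u\ge0$ and $u(x^0)=0$ from the normalization) are all sound, and you correctly isolate the one genuinely delicate point: the lower bound must come from testing the supremum at the fixed minimizer $x^0$, since testing at the running point reintroduces $\int_P u$ and destroys uniformity. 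Two small things deserve a sentence in a written-up version: first, the identity above presupposes that the additive normalization of $E$ in Theorem \ref{Chen}$(i)$ matches the one implicit in $\delta E$; any mismatch is a fixed constant and is harmlessly absorbed into $C$, but it should be said. Second, $x^0$ depends on $u$, so the uniformity of $\bigl|\langle x^0,\nabla u_0(x)\rangle\bigr|\le(\max_{y\in P}|y|)\,|\nabla u_0(x)|$ rests on $x^0\in P$ together with the integrability of $|\nabla u_0|$ coming from the Guillemin-type logarithmic singularity of $u_0$; you note both, so this is a matter of emphasis rather than a gap. The argument is of the same Legendre-duality flavour as the source it replaces, but having it spelled out makes the dependence of $C$ on $(P,u_0)$ alone completely explicit.
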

\qed


\subsection{Toric test configurations}

Let us continuously fix the coordinate of $M_{\R}$. 
By definition of the moment map, any Hamilton function associated with $\mu \in N_{\R}$ is translated into the affine function on $P$. General f configuration corresponds to a convex, rational piecewise-linear function $f\colon P\to \R$. In fact the compactification $(\overline{\cX}, \overline{\cL}) \to \P^1$ is constructed by the $(n+1)$-dimensional polytope 
\begin{equation}\label{bigpolytope}
\fP : = \big\{ (x, y) \in M_\R\times\R: \ f(x) \leq y \leq B \big\}, 
\end{equation}
where the constant $ B \geq \max_P f$ is taken sufficiently large. 
Note that $\cX$ is necessary normal.  
Non-Archimedean K-energy precisely equals to 
\begin{equation}
L(f) =  \int_{\partial P} f - \hat{S}\int_P f.   
\end{equation} 
We have a combinatorial description of the volume $V= \vol(P)$ and 
\begin{equation}
\hat{S}= \frac{\area(\partial P)}{\vol(P)}. 
\end{equation}
The above integration on $P$ is for the Lebesgue measure of $M_{\R}\simeq \R^n$ and the area measure $d\sigma$ on the facet $\{ x\in P: \ell(x)=0\} $ is determined by 
\begin{equation*}
 dx_1\cdots dx_n = \pm d\sigma \wedge d\ell. 
\end{equation*}

\begin{rem}
The above $L(f)$ is not completely equivalent to the Donaldson-Futaki invariant introduced by \cite{Don02}. Indeed $L(f)$ should be homogeneous for the {\em normalized} finite base change while the Donaldson-Futaki invariant is not. 
They are equivalent when the central fiber $\cX_0$ is reduced. See \cite{BHJ15} for the detail. 
\end{rem}

We will measure the positivity of $L(f)$, in terms of the new J-norm 

\begin{equation*}
\norm{f}_J := \inf_{\ell}\bigg\{  \frac{1}{\vol(P)}\int_P (f+\ell) -\min_P \{ f+\ell\} \bigg\}, 
\end{equation*}

where $\ell$ runs through all affine functions. 
It is easy to see that $\norm{f}_J \geq 0$ and the equality holds if and only if $f$ is an affine function. 
In the toric setting, this can be replaced to the $L^1$-norm once the convex functions are normalized. 
Recall that a convex function is said to be normalized if $f \geq 0$ and $f(0)=0$. 

\begin{lem}
There exists a constant $\d >0$ such that 
\begin{equation*}
\frac{\d}{V} \int_P f \leq \norm{f}_J \leq \frac{1}{V}\int_P f 
\end{equation*} 
holds for any normalized convex function $f$. 
\end{lem} 

\begin{proof}
It is clear $V\norm{f}_J \leq \int_P f $. 
Suppose $\d \int_P f \leq V\norm{f}_J$ is not true. 
From a standard approximation argument, there exists a sequence of normalized $f_k \in \cC_\infty$ such that 
$\int_P f_k=1$
and $\norm{f_k}_J \to 0$ as $k \to \infty$. 
By the compactness result (Corollaly $5.2.5$ in \cite{Don02}) there exists a subseqence (still denoted by $f_k$) which converges locally uniformly to a convex function $f \geq 0$ defined on $P^\circ$. 
It forces $\norm{f}_J=0$ and contradicts to $\int_P f_k=1$. 
\end{proof}

Finally we mention to the relation with the Donaldson's boundary norm 

\begin{equation}
\norm{f}_b := \int_{\partial P} f
\end{equation}
defined for any normalized convex function $f$, continuous up to the boundary. 
The definition is valid only for the toric situation, however, it is powerful to simplify the arguments. 
We introduce the equivalent definitions of toric uniform K-stability by the following proposition. 

\begin{prop}\label{norm comparison}
For any toric polarization, the following conditions are equivalent. 
\begin{itemize}
\item[$(1)$]
There exists $\e>0$ such that 
$L(f) \geq \norm{f}_b$ for any normalized convex, rational piecewise-linear function $f\colon P \to \R$.  
\item[$(2)$]
There exists $\e>0$ such that 
$L(f) \geq \e \int_P f$ for any normalized convex, rational piecewise-linear function $f\colon P \to \R$.  
\item[$(3)$]
There exists $\e>0$ such that 
$L(f) \geq \e \norm{f}_J $ for any convex, rational piecewise-linear function $f\colon P \to \R$. 
\end{itemize} 
\end{prop} 

\begin{proof}
Using the polar coordinate, one may easily see that $\frac{1}{V}\int_P f \leq C\int_{\partial P}f$ holds whenever $f$ is normalized. 
Therefore $(1)\Rightarrow(2)\Rightarrow(3)$ is clear. 

Suppose $(1)$ is not true. 
There exists a sequence of normalized $f_k \in \cC_\infty$ such that 
\begin{equation}\label{boundary norms are bounded}
\int_{\partial P} f_k =1
\end{equation}
and $L(f_k) \to 0$ as $k \to \infty$. 
From (\ref{boundary norms are bounded}), by the compactness result (Proposition $5.2.6$ in \cite{Don02}) there exists a subseqence (still denoted by $f_k$) which converges locally uniformly to a convex function $f \geq 0$ defined on $P^\circ$ and indeed continuously extended to any facet of $P$. 
If we assume $(3)$, 

\begin{equation*}
\e \d V^{-1}\int_P f_k \leq \e \norm{f_k}_J \leq L(f_k) \to 0
\end{equation*}

implis $\int_P f=0$ and hence $f \equiv 0$ on $P$. 
On the other hand, 
\begin{equation*}
L(f_k) = \int_{\partial P} f_k -\hat{S}\int_{P} f_k 
= 1 - \hat{S} \int_{P} f_k \geq \frac{1}{2} 
\end{equation*} 
for any sufficiently large $k$. which is a contradiction.  
\end{proof}







\subsection{Proof of Theorem D}

Theorem B shows that coercivity implies the stability in the toric sense. 
In the toric setting given a test configuration $f$ and the reference potential $u$, we may regard $u+tf$ as the dual description of the associated geodesic.  
The reader may refer to \cite{SZ12} for one exposition to this idea. 
Notice however that for the reduced norm we face the same problem; the affine function $\ell_t$ which attains $J_T(u+tf)=J(u+tf+\ell_t)$, depends on $t$. 

Let us now prove the rest half of Theorem D; stability to coercivity direction. 
By a standard approximation result (Proposition 5.2.8 and Corollary 5.2.5 of \cite{Don02}), we may assume that $L(u) \geq \e \norm{u}_J$ holds for any $u\in \cS$: toric potential with the Guillemin boundary condition. 
. 

Fix $u_0 \in \cS$ to set 
\begin{align*}
&L_0(u):= \int_{\partial P} u  -\int_P  \bigg(-\frac{1}{2} \sum_{i, j} \frac{\partial^2 u_0^{ij}}{\partial x_i \partial x_j} \bigg)u \ \ \text{and} \\ 
&M_0(u):= -\frac{1}{2}\int_P \log \det(u_{ij}) + L_0(u).   
\end{align*}
The idea of the proof originates from the following lemma. 
\begin{prop}[\cite{Don02}, Proposition $3.3.4$]\label{minimizer}
For any $u \in \cC_{\infty}$, $M_0(u) \geq M_0(u_0)$ holds. 
\end{prop}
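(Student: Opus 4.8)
The plan is to read $M_0$ as the energy whose Euler--Lagrange equation is the prescribed scalar curvature equation $S_{\omega_\phi}=S_0$, where $S_0:=-\tfrac12\sum_{i,j}\partial^2 u_0^{ij}/\partial x_i\partial x_j$ is the scalar curvature density of the fixed background $u_0$ (Theorem~\ref{Abreu}); this equation is solved tautologically by $u=u_0$, and the convexity of the nonlinear term will upgrade this critical point into a global minimum over the convex set $\cC_{\infty}$.

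First I would record that $V M_0$ is convex along each segment $u_t=(1-t)u_0+tu$: the map $X\mapsto-\log\det X$ is convex on positive-definite symmetric matrices and $u\mapsto(u_{ij})$ is linear, so $-\int_P\log\det(u_{ij})$ is convex, while $L_0$ is affine in $u$. Convexity is most efficiently used through the supporting-hyperplane inequality at $u_0$: since the differential of $\log\det$ at $((u_0)_{ij})$ is $(u_0^{ij})$, one has pointwise on $P^\circ$
\begin{equation*}
-\log\det(u_{ij})\geq-\log\det((u_0)_{ij})-\sum_{i,j}u_0^{ij}\big(u_{ij}-(u_0)_{ij}\big).
\end{equation*}
Integrating over $P$ and adding the (linear) difference $L_0(u)-L_0(u_0)$ gives $V M_0(u)-V M_0(u_0)\geq D(u_0)(g)$, where $g:=u-u_0$ and $D(u_0)(g)$ denotes the first variation of $V M_0$ at $u_0$ in the direction $g$.

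Next I would evaluate $D(u_0)(g)$ and show it vanishes. Writing
\begin{equation*}
D(u_0)(g)=-\int_P\sum_{i,j}u_0^{ij}g_{ij}+\int_{\partial P}g-\int_P S_0\,g
\end{equation*}
and applying Donaldson's toric integration-by-parts formula (\cite{Don02}) to the first term --- legitimate precisely because $u_0\in\cS$ has the canonical boundary behaviour $u_0-\tfrac12\sum_k\ell_k\log\ell_k\in C^{\infty}(P)$ --- the boundary contribution cancels $\int_{\partial P}g$, while the interior contribution is the term built into $L_0$ from the density $S_0$. Thus $D(u_0)(g)$ reduces to $\int_P g\,(S_{\omega_{\phi_0}}-S_0)\,dx$, which is zero since $S_0=S_{\omega_{\phi_0}}$ by construction. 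Combining the two displays yields $M_0(u)\geq M_0(u_0)$.

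The hard part will be the boundary analysis underlying the integration-by-parts step. For a general $u\in\cC_{\infty}$ the Hessian $(u_{ij})$ may degenerate at $\partial P$, and $\int_P\log\det(u_{ij})$ need not even be finite. The saving features are that in the supporting-hyperplane inequality all second derivatives fall on the smooth reference $u_0$, whereas $g$ enters the boundary term only undifferentiated, where continuity on $\overline P$ suffices; and that whenever $-\int_P\log\det(u_{ij})=+\infty$ the asserted inequality holds trivially, so no a priori integrability of $u$ is needed. The only genuinely technical point is therefore Donaldson's boundary estimate for the class $\cS$, which is exactly what makes the boundary contribution collapse to $\int_{\partial P}g\,d\sigma$ and cancel.
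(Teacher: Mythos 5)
The paper does not prove this proposition; it imports it verbatim from Donaldson, and your reconstruction is indeed Donaldson's argument: convexity of $-\log\det$ plus the supporting-hyperplane inequality at $u_0$, reducing everything to the first-variation term, which is controlled by the toric integration-by-parts formula for potentials with Guillemin boundary behaviour. So the strategy is the right one. The one place where your write-up overclaims is the assertion that $D(u_0)(g)=0$ for $g=u-u_0$ with $u$ merely in $\cC_\infty$. For such $u$ the second derivatives and even the first derivatives of $u$ can blow up at $\partial P$, and the intermediate boundary terms in the double integration by parts of $\int_P u_0^{ij}g_{ij}$ do involve $\nabla g$ near $\partial P$ --- your remark that $g$ ``enters the boundary term only undifferentiated'' applies only to the final term, not to the ones you must discard along the way. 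Donaldson's lemma handles this by exploiting the convexity of $u$ itself: for $u_0\in\cS$ and $u$ continuous, convex on $P$ and smooth on $P^\circ$ one gets the \emph{inequality} $\int_P\sum u_0^{ij}u_{ij}\leq \int_{\partial P}u-\int_P A(u_0)\,u$ (the discarded boundary contributions have a sign forced by convexity of $u$), with equality when $u=u_0$. Since $g$ is a difference of convex functions and has no sign structure of its own, you should apply the lemma to $u$ and to $u_0$ separately and subtract; this yields $D(u_0)(g)\geq 0$ rather than $=0$, which is all the supporting-hyperplane step needs. With that substitution (and your correct observation that the case $-\int_P\log\det(u_{ij})=+\infty$ is trivial, since $L_0(u)$ is finite for $u$ continuous on $P$), the proof is complete and agrees with the cited source.
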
 
\qed

If $u\geq 0$, H\"older's inequality shows 
\begin{equation*}
\abs{L(u)-L_0(u)} = \abs{ \ \int_P \bigg( \hat{S} + \frac{1}{2} \sum_{i, j} \frac{\partial^2 u_0^{ij}}{\partial x_i \partial x_j} \bigg) u \ } \leq \frac{C}{V}\int_P u.  
\end{equation*}  
For an arbitrary $u \in \cS$ we take an affine function $\ell$ and apply the above inequality to $(u+\ell) - \min_P \{ u+\ell \} \geq 0$. 
It follows 
\begin{equation*}
\abs{L(u)-L_0(u)} \leq C \norm{u}_J. 
\end{equation*} 
Next we take a large $k \in \N$ and decompose the J-norm as 
\begin{equation*}
\norm{u}_J = (k+1)\norm{u}_J  -k\norm{u}_J. 
\end{equation*}
The first term $(k+1)\norm{u}_J$ is estimated by the stability $L(u) \geq \e \norm{u}_J$ so that 
\begin{equation*}
\abs{L(u)-L_0(u)}  \leq C(k+1)\e^{-1}L(u) -  C k \norm{u}_J. 
\end{equation*} 
Therefore K-energy is bounded from below by 
\begin{align*}
M(u) &\geq -\frac{1}{2}\int_P \log \det(u_{ij}) + \frac{1}{1+C(k+1)\e^{-1}} L_0(u) + \frac{Ck}{1+C(k+1)\e^{-1}}\norm{u}_J\\
&= M_0(\frac{u}{1+C(k+1)\e^{-1}}) - n \log (1+C(k+1)\e^{-1}) +  \frac{Ck}{1+C(k+1)\e^{-1}}\norm{u}_J.  
\end{align*}
The first term $M_0$ is bounded from below by Proposition \ref{minimizer}. 
This is the point where we need $\cC_\infty$ because the constant mutiple of $u$ does not satisfy the Guillemin boundary condition. 
As $k \to \infty$ the coefficient of $J$-norm approaches to $\e$. 
If $u$ is normalized Lemma \ref{toricJ} yields 
$\norm{u}_J \geq \d V^{-1}\int_P u \geq \d J(u)-C$. 
Summarizing up  we obtain constants $C'$ and $\e'$ such that 
\begin{equation*}
M(u) \geq \e' J(u) -C' 
\end{equation*} 
holds for any normalized $u \in \cS$. 

Let us finally rescale $u$. For any affine function $\ell$ we have
\begin{equation}
M(u +\ell ) = M(u)+L(\ell). 
\end{equation} 
One can easily check this by Theorem \ref{toricK-energy}.  
The stability assumption yields $L(\ell)\geq \norm{\ell}_J=0$. 
Since $-\ell$ is also affine we have $L(-\ell)\geq0$ . Thus $L(\ell)=0$ and we conclude 
\begin{equation*}
M(u) \geq \e' \inf_{\ell} J(u+ \ell) -C' 
\end{equation*} 
for any $u \in \cS$. 
As we noted, addition of an affine function corresponds to the torus action. 

\qed

\begin{rem}\label{constant}
The last part of the proof follows from the pluriharmonicity of ${\bf m}(g)$ along $\Aut^0(X, L)$. 
Actually $T$-coercivity implies ${\bf m}(g)$ bounded from below on the quasi-projective variety hence it is constant. 
Combined with the slope formula we have $M^{\NA}(\cX, \cL)=0$ for  (equivariant) product configurations. 
\end{rem}

If one require the a priori stronger coercivity condition in terms of the boundary norm: 
\begin{equation}\label{boundary coercivity}
M(u) \geq \e \norm{u}_b -C
\end{equation} 
for all normalized toric potential $u$, it simply implies the stability against the boundary norm. 
Indeed for a reference normalized $u \in \cS$ and normalized $f \in \cC_\infty$ we observe 
\begin{align*}
L(tf) 
&= \frac{1}{2}\int_P \log \frac{\det((u+tf)_{ij})}{\det(u_{ij})} +M(u+tf) -M(u) \\
&\geq \e \norm{u+tf}_b -C_u. 
\end{align*}
Dividing the both sides by $t$ and letting $t \to \infty$, we conclude $L(f) \geq \e \norm{f}_b$. 
Conversely, one can show that the stability $L(f) \geq \e \norm{f}_b$ implies (\ref{boundary coercivity}), by the same argument as in the proof of Theorem D. 
Since we have Lemma \ref{toricJ}, the condition (\ref{boundary coercivity}) clearly implies our original definition of the coercivity $M \geq \e J_T -C$. 
By Theorem D and Proposition \ref{norm comparison}, the two coercivity conditions are after all equivalent, however, it seems more subtle to derive this fact without using Theorem D. 


\section{Remarks about stability and coercivity in the general setting}

In relation with Theorem B, we discuss the definition of stability proposed in the introduction, which would work for general polarizations and automorphism groups. 

\begin{dfn}
Let $G\subseteq \Aut^0(X, L)$ be a reductive subgroup and $C(G)$ the center of $G$. Take a compact subgroup $K$ such that $G=K_\C$. 

We say that K-energy functional is $G$-coercive  if it satisfies the growth condition 
\begin{equation*}
M(\phi) \geq \e  J_{C(G)}(\phi) -C 
\end{equation*}
for any $K$-invariant metric $\phi \in \cH^K$. 

We say a polarized manifold $(X, L)$ is uniformly K-stable for $G$ if there exists a constant $\e>0$ such that 
\begin{equation*}
M^\NA (\cX, \cL) \geq \e J_{C(G)}^{\NA}(\cX, \cL)
\end{equation*} 
holds for any $G$-equivariant test configuration $(\cX, \cL)$. 
\end{dfn} 

Notice that $\phi_g$ is $K$-invariant for any $g \in C(G)$. 
Theorem B immediately implies:  
\begin{thm}
If K-energy functional is G-coercive, then the polarized manifold is uniformly K-stable for $G$. 
\end{thm}

If $X$ is a Fano manifold we have the natural polarization $L=-K_X$. 
In this case we have another energy characterized by the differential 
\begin{equation*}
\d D(\phi) =  \frac{1}{V}\int_X ( \d \phi) (e^\rho-1) \omega_\phi^n,  
\end{equation*}
where $\rho$ is the normalized Ricci potential of $\omega_\phi$. 
A critical point gives the K\"ahler-Einstein metric which is equivalent to the constant scalar curvature metric. 
Non-Archimedean D-energy $D^\NA(\cX, \cL)$ for a test configuration is defined and it gives the slope of the energy. 
See \cite{Berm16}, \cite{BHJ16} for the detail.  
We say a Fano manifold $X$ is uniformly D-stable for $G$ if there exists a constant $\e>0$ such that 
\begin{equation*}
D^\NA (\cX, \cL) \geq \e J_{C(G)}^{\NA}(\cX, \cL)
\end{equation*} 
holds for any $G$-equivariant test configuration $(\cX, \cL)$ of $(X, -K_X)$. 

Our formulation for the uniform K-stability can also be adopted to D-stability. 
In particular G-coercivity of D-energy implies uniform D-stability for $G$. 
Moreover, following \cite{BBJ15} we may derive G-coercivity of D-energy from the parallel uniform stability. 
This issue will be developed in our subsequent paper.

For a general polarization it seems still difficult to derive coercivity from uniform stability. We proceed to explain the relation with constant scalar curvature K\"ahler metrics. The next statement shows that we should take $G$ as the full automorphism group. 

\begin{thm}\label{metric to coercivity}
If a polarized manifold $(X, L)$ admits a constant scalar curvature K\"ahler metric, then 
$\Aut^0(X, L)$ is reductive and 
K-energy functional is coercive for $G=\Aut^0(X, L)$.  
We take $K$ as the group of isometry. 
For a K\"ahler-Einstein Fano manifold, D-energy is coercive for $\Aut^0(X, L)$. 
\end{thm} 
\begin{proof}
This follows from the proof of Theorem $7.2$ in \cite{DR17}, as soon as one has the regularity result of \cite{BDL16}. 
Notice that if there exists a cscK metric then $\Aut^0(X, L)=K_\C$ by the theorem of Matsushima-Lichnerowicz. 
We apply Theorem $3.4$ of \cite{DR17} to the space of $K$-invariant metrics endowed with the action of the center $C(\Aut^0(X, L))$. 
In the same way as \cite{DR17}, it is sufficient to check the condition (P3) and (P5) of their Hypothesis $3.2$.
If there exists a cscK metric, Theorem $1.4$ of \cite{BDL16} states that any minimizer of K-energy (extended to singular metrics) is automatically smooth and hence cscK. 
It assures the condition (P3). It follows from \cite{BB14} that the two cscK metrics are connected by some $g \in \Aut^0(X, L)$. 
We may moreover claim $g \in C_K(\Aut^0(X, L))$ using an elementary group theory (see Theorem $7.7$ and the proof of Claim $7.9$ in \cite{DR17}). Notice that $C_K(\Aut^0(X, L))=C(\Aut^0(X, L))$, since $\Aut^0(X, L)=K_\C$. Thus we have the condition (P5). 
Proof for D-energy is totally the same.  
\end{proof} 

For a Fano manifold the coercivity in fact implies the existence of a metric. 
The argument shows that the center actually controls the whole group. 

\begin{thm}\label{coercivity to metric in the Fano case} 
Let $X$ be a Fano manifold. Assume that D-energy functional is coercive for a reductive subgroup $G=K_\C \subseteq \Aut(X, -K_X)$. Then $X$ admits a $K$-invariant K\"ahler-Einstein metric in the first Chern class. 
\end{thm}
\begin{proof}
The proof relies on the variational approach developed by a seminal work of \cite{BB10}, \cite{BBGZ13} and \cite{BBEGZ11}. 
First we note that $J_{C(G)}$ is an exhaustion function on $\cH^K/C(G)$. 
One can directly check it using the argument for $J$ in \cite{BBGZ13}, or it also follows from the fact that $J_{C(G)}^{\NA}(\cX, \cL)$ is strictly positive for any non-product equivariant test configuration (see \cite{BHJ15} for $C(G) =\{\id\}$ case). 
The coercivity condition
\begin{equation*}
D(\phi) \geq \e J_{C(G)}(\phi) -C  
\end{equation*}
on $\cH^K$ implies that Ding functional has a minimizer $\phi$ on the $L^1$-completion $\cE^{1, K}/C(G)$. 

It is easy to check that the associated function ${\bf d}(g)= D(\phi_g)$ in $g \in \Aut^0(X, L)$ is pluriharmonic, similarly to Theorem \ref{plurisubharmonicity}. 
The same argument as Remark \ref{constant} yields that ${\bf d}(g)$ is constant along the center. 

Now we recall that for any one-parameter subgroup $\mu\colon \C^* \to G$ the slope of ${\bf d}(\mu(e^{-t}))$ is equivalent to the classical Futaki character. 
Since the character is defined on the reductive Lie algebra $\fg$ (written as the direct sum of the center and the derived algebra) the slopes are nontrivial only along the center. Therefore ${\bf d}(g)$ is actually constant along $G$. 
The differentiability result of \cite{BB10} assures 
\begin{equation*}
\int_X u (d D)_\phi =0 
\end{equation*}
for any $K$-invariant smooth function $u$. Since ${\bf d}(g)$ is constant along the whole $G$, the measure $\mu:= (d D)_\phi$ is $K$-invariant. It then follows that for any smooth function $v$ and $g \in K$ 
\begin{equation*}
\int_X v\mu =\int_X g_*(v\mu)
=\int_X ((g^{-1})^*v)g_*(\mu) 
=\int_X ((g^{-1})^*v)\mu.  
\end{equation*}
Integrating against the Haar measure we have 
\begin{equation*}
\int_X v \mu = \int_X u\mu =0. 
\end{equation*}
It yields $\mu=0$ as desired. 

\end{proof} 


 
 




\newpage

\end{document}